\newcounter{prob}
\newcommand{\Q}{\mathbb{Q}}
\newcommand{\Z}{\mathbb{Z}}
\newcommand{\C}{\mathbb{C}}
\newcommand{\m}{\longrightarrow}
\newtheorem{theorem}{Theorem}[section]
\newtheorem{lemma}[theorem]{Lemma}
\newtheorem{proposition}[theorem]{Proposition}
\newtheorem{remark}[theorem]{Remark}
\title[Stable homology and Occam's razor]{Some stable homology calculations and Occam's razor for Hodge structures}
\author{Alexander Kupers}
\thanks{Alexander Kupers is supported by a William R. Hewlett Stanford Graduate Fellowship, Department of Mathematics, Stanford University, and was partially supported by NSF grant DMS-1105058.}
\author{Jeremy Miller}
\date{\today}
\begin{document}

\begin{abstract}
Motivated by motivic zeta function calculations, Vakil and Wood in \cite{VW} made several conjectures regarding the topology of subspaces of symmetric products. The purpose of this note is to prove two of these conjectures and disprove a strengthening of one of them.  
  \end{abstract}

\maketitle

%\tableofcontents

\section{Introduction} 

In \cite{VW} Vakil and Wood introduced a method for predicting the rational homology groups of subspaces of symmetric products of complex varieties and used it to formulate several conjectures regarding these homology groups. This method uses motivic zeta functions and a principle they dub ``Occam's razor for Hodge structures.'' Roughly, the idea of this principle is as follows. For some finite type varieties $X$ over a field one can compute the Hodge-Deligne polynomial, which is given by the two-variable polynomial $\sum_k (-1)^k h^{p,q}(Gr_{W}^{p+q}H^*_c(X,\Q)) \, x^p y^q$. That is, it is the generating function with coefficient of $x^p y^q$ equal to the dimension of $p$-part of the Hodge filtration in the $(p+q)$th associated graded of the weight filtration. If $X$ were smooth and proper these are the classical Hodge numbers, but for varieties that are not smooth and proper one cannot determine the Betti numbers from the Hodge-Deligne polynomial without more information about the Hodge structure. In many cases there is however a simplest choice of Hodge structure compatible with the Hodge-Deligne polynomial and Occam's razor for Hodge structures obtains predictions by assuming that this simplest choice is the correct one. For certain families of locally closed subvarieties of symmetric products, Vakil and Wood are able to determine the limiting Hodge-Deligne polynomial using the theory of motivic zeta functions. Using this and Occam’s razor for Hodge structures, they are able to make conjectures about the limiting homology of these families of subvarieties.

We prove two of Vakil and Wood's conjectures, namely Conjecture G and Conjecture H. This can be seen as giving evidence supporting the predictive power of Occam's razor for Hodge structures. On the other hand, we also show that Formula 1.50 of \cite{VW} -- here equation \ref{eqnformula150} -- is incorrect, giving an example where Occam's razor for Hodge structures predicts the wrong answer. 

We now give a definition of the types of subspaces of symmetric products to which the conjectures pertain. Recall that the $k$th fold symmetric product of a space $X$, denoted $Sym^k (X)$, is the quotient of $X^k$ by the natural action of the symmetric group on $k$ letters. A point $\xi=(x_1, \ldots, x_k) \in Sym^k X$ determines a partition of the number $k$ by recording the multiplicity of each point $x_i$ appearing in $\xi$. For example, if $a$, $b$ and $c$ are distinct complex numbers, the configuration $(a,b,b,c) \in Sym^4(\C)$ corresponds to the partition $1+1+2$.

For $\lambda$ a partition of $k$, define $w_{\lambda}(X)$ to be the subspace of $Sym^k(X)$ of points whose corresponding partition is $\lambda$. Given a partition $\lambda=(a_1 + \ldots + a_n)$ of $k$, let $1^j \lambda$ be the partition $(1 + 1 + \ldots + 1 + a_1 + \ldots + a_n)$ of $j+k$ obtained by adding $j$ ones to $\lambda$. Note that the space $w_{1^j}(X)$ is the configuration space of $j$ unordered distinct particles in $M$. We can now state Conjectures G and H.

\begin{theorem}[Conjecture G]
\label{G}
For $j$ sufficiently large compared to $i$, we have that $H_i(w_{1^j}(\C P^2);\Q)=\Q$ for $i=0,2,4,7,9,11$ and $0$ otherwise. 
\end{theorem}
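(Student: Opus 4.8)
The plan is to recognize $w_{1^j}(\mathbb{C}P^2)$ as the unordered configuration space $B_j(\mathbb{C}P^2)$ of $j$ distinct points, and to compute its rational homology in the limit $j\to\infty$. First I would invoke homological stability for configuration spaces of closed manifolds (Church, with refinements by Randal--Williams and others): $H_i(B_j(\mathbb{C}P^2);\Q)$ is independent of $j$ once $j\gg i$. This reduces the problem to identifying the single graded vector space $\lim_j H_*(B_j(\mathbb{C}P^2);\Q)$ and showing it has Poincar\'e series $(1+t^2+t^4)(1+t^7)$, which is precisely $\Q$ in degrees $0,2,4,7,9,11$ and $0$ otherwise.

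To access the stable homology I would use scanning together with the group-completion theorem (Segal, McDuff): the stabilized configuration space is homology equivalent to the space of sections of the fiberwise one-point compactification $\dot T(\mathbb{C}P^2)=S(T\mathbb{C}P^2\oplus\underline{\R})$, a bundle over $\mathbb{C}P^2$ with fiber $S^4$, and the stable homology of $B_j$ is the rational homology of the component of nonzero fiberwise degree. The next step is to observe that this $S^4$-bundle is rationally trivial: its fiberwise Euler class lies in $H^5(\mathbb{C}P^2;\Q)=0$, and the only other possible k-invariant correction would land in $H^8(\mathbb{C}P^2;\Q)=0$, so rationally $\dot T(\mathbb{C}P^2)\simeq \mathbb{C}P^2\times S^4$ and the section space becomes a nonzero-degree component of $\mathrm{Map}(\mathbb{C}P^2,S^4)$.

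The heart of the argument is then a rational homotopy computation of this component. Using the minimal model $(\Lambda(x_4,y_7),\,dy=x^2)$ of $S^4$ and the formality of $\mathbb{C}P^2$ with $H^*(\mathbb{C}P^2;\Q)=\Q[u]/(u^3)$, the rational homotopy of the component is computed from $\bigoplus_k H^k(\mathbb{C}P^2)\otimes\pi_{*+k}(S^4)_\Q$ equipped with a differential obtained by linearizing the single k-invariant $[\iota,\iota]\in\pi_7(S^4)$ at a map of nonzero degree $k$. Schematically this differential sends $a\otimes\iota\mapsto 2k\,(u^2\cup a)\otimes w$; since $u^2\cup u=u^3=0$ and $u^2\cup u^2=0$ while $u^2\cup 1=u^2\neq 0$, it is an isomorphism out of $1\otimes\iota$ (degree $4$) onto $u^2\otimes w$ (degree $3$) and vanishes elsewhere. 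I would conclude that the surviving rational homotopy is $\Q$ in degrees $2,5,7$ (from $u\otimes\iota$, $u\otimes w$, $1\otimes w$), that this is the same for every nonzero degree $k$, and that the resulting minimal model is $(\Lambda(z_2,z_5,z_7),\,dz_5=z_2^3)$ -- that is, the component is rationally $\mathbb{C}P^2\times S^7$. Its cohomology is $\Q[z_2]/(z_2^3)\otimes\Lambda(z_7)$, with Poincar\'e series $(1+t^2+t^4)(1+t^7)$, exactly as required.

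The main obstacle I anticipate is pinning down this rational homotopy computation, and in particular establishing the truncating differential $dz_5=z_2^3$: a priori the degree-$2$ class $z_2$ could be a free polynomial generator, which would produce infinitely many nonzero stable homology groups, so one must show that its cube dies, reflecting the relation $u^3=0$ in $\mathbb{C}P^2$ interacting with the quadratic k-invariant of $S^4$. The contrast with $B_j(S^2)$, whose stable homology is only $\Q$ in degrees $0$ and $3$ because there the fundamental class is hit by the differential, shows how delicately the answer depends on the cup-product ring. Secondary points requiring care are the justification of the scanning/group-completion identification with the correct component for the \emph{closed} manifold $\mathbb{C}P^2$, and the precise signs in the Haefliger (Brown--Szczarba) model differential.
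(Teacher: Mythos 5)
Your proposal is correct and follows essentially the same route as the paper: scanning (Theorem \ref{scan}) to reduce to sections of $\dot T\C P^2$, rational triviality of that $S^4$-bundle over a $4$-dimensional base (Proposition \ref{BM}) to pass to $Map_l(\C P^2,S^4)$, and a Sullivan-model computation of that component, whose answer $\Q[z_2]/(z_2^3)\otimes\Lambda(z_7)$ agrees with the paper's (your model $(\Lambda(z_2,z_5,z_7),\,dz_5=z_2^3)$ is quasi-isomorphic to the M{\o}ller--Raussen model of Proposition \ref{model} via $z_7\mapsto b_2v_5+4v_7$). The one step you leave open is exactly the step the paper outsources to \cite{MR}: the linearized $k$-invariant computation you sketch determines only the rational homotopy groups, not the decomposable differential $dz_5=z_2^3$, which requires the full (non-linearized) Haefliger/Brown--Szczarba model --- eliminating the contractible pair coming from $d(y\otimes[\C P^2]^{\vee})=2l(x\otimes 1)+(x\otimes u^{\vee})^2$ is what produces the cube --- so you should either carry that out or cite Example 2.5(2) of \cite{MR} as the paper does.
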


\begin{theorem}[Conjecture H]\label{thmconjectureh}
The colimit $\text{colim}_{j \to  \infty} H_i(w_{1^j2}(\C^d);\Q)$ is eventually periodic in $i$. 
\end{theorem}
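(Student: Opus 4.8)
The plan is to reduce the claim to a scanning (group-completion) computation for a configuration space of an open manifold, and then to read off the stable homology by a short rational-homotopy calculation. First I would identify $w_{1^j2}(\C^d)$ up to homotopy. A point of $w_{1^j2}(\C^d)$ consists of a distinguished point $p\in\C^d$ (the double point) together with $j$ further mutually distinct points in $\C^d\setminus\{p\}$. Recording $p$ defines a fiber bundle $w_{1^j2}(\C^d)\to\C^d$ whose fiber is the unordered configuration space $w_{1^j}(\C^d\setminus\{p\})\cong w_{1^j}(\C^d\setminus\{0\})$. As the base $\C^d$ is contractible, this gives a homotopy equivalence $w_{1^j2}(\C^d)\simeq w_{1^j}(M)$ with $M:=\C^d\setminus\{0\}=\R^{2d}\setminus\{0\}$, under which the stabilization maps $w_{1^j2}(\C^d)\to w_{1^{(j+1)}2}(\C^d)$ that add a simple point near infinity become the usual configuration-space stabilization maps for the open manifold $M$. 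So it suffices to show that $\operatorname{colim}_{j\to\infty}H_i(w_{1^j}(M);\Q)$ is eventually periodic in $i$.

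Next I would apply homological stability together with scanning. Since $M$ is a connected open manifold, the stabilization maps are homology isomorphisms in a range growing with $j$, so the colimit equals the homology of a single path component of the scanning target. Because $M$ is parallelizable of dimension $2d$, that target is the space of compactly supported sections of the fiberwise one-point compactification of $TM$, which trivializes to the mapping space $\mathrm{Map}_*(M^+,S^{2d})$, where $M^+$ is the one-point compactification of $M$. Thus $\operatorname{colim}_{j\to\infty}H_i(w_{1^j}(M);\Q)$ is the rational homology of one component of $\mathrm{Map}_*(M^+,S^{2d})$.

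Finally I would compute. The space $M^+=(\R^{2d}\setminus\{0\})^+$ is $S^{2d}$ with its two ends (the origin and the point at infinity) identified, hence $M^+\simeq S^{2d}\vee S^1$, and so
\[\mathrm{Map}_*(M^+,S^{2d})\simeq\Omega^{2d}S^{2d}\times\Omega S^{2d}.\]
All components of this space are homotopy equivalent, so I only need the rational homology of a single component. Rationally $\Omega^{2d}_0S^{2d}\simeq_{\Q}S^{2d-1}$, since its only nonzero rational homotopy group sits in the odd degree $2d-1$; and $H_*(\Omega S^{2d};\Q)\cong H_*(\Omega\Sigma S^{2d-1};\Q)$ is the tensor algebra on one generator in degree $2d-1$, i.e.\ $\Q$ in every degree divisible by $2d-1$. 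By Künneth the Poincaré series of the product is
\[(1+t^{2d-1})\cdot\frac{1}{1-t^{2d-1}}=\frac{1+t^{2d-1}}{1-t^{2d-1}}=1+2\sum_{k\geq1}t^{(2d-1)k}.\]
Hence the colimit is $\Q$ in degree $0$, is $\Q^2$ in every positive degree divisible by $2d-1$, and vanishes otherwise; in particular it is periodic of period $2d-1$ for $i\geq1$, which proves the eventual periodicity.

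The main obstacle I anticipate is not the rational computation but the middle step: verifying that the stabilization maps on $w_{1^j2}(\C^d)$ match configuration-space stabilization on $M$, and that homological stability and the scanning/group-completion identification of stable homology hold in exactly the form stated for the open manifold $M=\R^{2d}\setminus\{0\}$. I also expect the bookkeeping around $M^+$ to be delicate: it is precisely the ``second end'' of $M$ (the deleted origin) that produces the extra $\Omega S^{2d}$ factor, and hence the doubling responsible for the periodicity of the Poincaré series.
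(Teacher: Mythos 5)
Your proposal is correct and follows essentially the same route as the paper: the fibration $w_{1^j2}(\C^d)\to\C^d$ identifying it with $w_{1^j}(\C^d-pt)$, McDuff's scanning theorem, the identification of the section space with a component of $\mathrm{Map}_*(S^{2d}\vee S^1, S^{2d})\simeq \Omega^{2d}S^{2d}\times\Omega S^{2d}$, and the same rational computation (the paper writes $H^*(\Omega S^{2d};\Q)$ as $\Q[a_{4d-2}]\otimes\Lambda(a_{2d-1})$ rather than as a tensor algebra, but the Poincar\'e series agree). The resulting answer matches the paper's equation \ref{eqncorrects}.
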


Vakil and Wood also predicted in Formula 1.50 of \cite{VW} that the groups appearing in Theorem \ref{thmconjectureh} are as follows:
\begin{equation}\label{eqnformula150} \text{colim}_{j \to  \infty} H_i(w_{1^j2}(\C^d);\Q) = \begin{cases} \Q & \text{if $i = 0$} \\
\Q^2 & \text{if $i = 2(2k-1)d-1$ or $4kd$ for $k\geq 1$} \\
0 & \text{otherwise.}\end{cases}\end{equation}
This is not the case; they are instead equal to:
\begin{equation}\label{eqncorrects} \text{colim}_{j \to  \infty} H_i(w_{1^j2}(\C^d);\Q) = \begin{cases} \Q & \text{if $i = 0$} \\
\Q^2 & \text{if $i = k(2d-1)$ for $k\geq 1$} \\
0 & \text{otherwise.}\end{cases}\end{equation}
Thus the spaces $w_{1^j2}(\C^d)$ are an example where Occam's razor for Hodge structures yields incorrect guesses for stable homology groups.

The organization of this note is as follows. In Section 2, we recall scanning and homological stability results of McDuff in \cite{Mc1} and Segal in \cite{Se}. Using these theorems as well as some rational homology calculations from \cite{Ser} and \cite{MR}, we prove Conjecture G in Section 3 and Conjecture H in Section 4.

We have been informed that Orsola Tommasi has an alternative proof of Conjecture H. She observed that the cohomology groups of $w_{\lambda}(  \C^d )$ have pure weight. This allows one to determine the stable rational cohomology of $w_{1^j2}(\C^d)$ from the corresponding element in the Grothendieck ring of varieties, which was computed in \cite{VW}. The case of $d=1$ of Conjecture H has also been proven by Thomas Church, Jordan Ellenberg and Benson Farb (Proposition 4.4 of \cite{CEF}) using the machinery of \'etale cohomology and $L$-functions. This is part of their general program relating homological stability results for configuration spaces to the combinatorics of polynomials over finite fields.

\textbf{Acknowledgments:} We would like to thank Melanie Wood for alerting us to these conjectures as well as thank her, Ravi Vakil and Thomas Church for feedback on drafts of this note.

\section{Scanning} Our primary tool for proving these conjectures is Theorem 1.1 of McDuff in \cite{Mc1} regarding the so-called scanning map. For $M$ a manifold, let $\dot TM$ denote the fiberwise one-point compactification of the tangent bundle of $M$. Let $\Gamma^c_k(\dot TM)$ denote the space of compactly supported sections of $\dot TM \m M$ of degree $k$ (see \cite{Mc1} or \cite{BMi} for the definition of degree) topologized with the compact-open topology. In \cite{Mc1} McDuff constructed a map $s:w_{1^j}(M) \m \Gamma^c_k(\dot TM)$ and proved the following theorem.

\begin{theorem}
The scanning map $s:w_{1^j}(M) \m \Gamma^c_j(\dot TM)$ induces an isomorphism on homology groups $H_i$ for $j \gg i$. If $M$ is the interior of a compact connected manifold with non-empty boundary,  then the scanning map induces an injection on all homology groups.
\label{scan}
\end{theorem}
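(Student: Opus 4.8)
Since this is McDuff's theorem (Theorem~1.1 of \cite{Mc1}), I would reconstruct the classical Segal--McDuff scanning argument. The overall plan is to pass to a stabilized, group-completed setting in which scanning becomes an honest weak equivalence, and then use homological stability to transport the conclusion back to each finite configuration space $w_{1^j}(M)$.

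First I would set up the algebraic structure. When $M$ is open, and in particular when it is the interior of a compact manifold with boundary, one can stabilize a configuration by importing a new point through a boundary collar; this makes $\coprod_j w_{1^j}(M)$ into a topological monoid with $\pi_0 = \N$, and the stabilization map $w_{1^j}(M) \m w_{1^{j+1}}(M)$ is multiplication by a single point. The scanning maps assemble into a map of monoids $s \colon \coprod_j w_{1^j}(M) \m \coprod_j \Gamma^c_j(\dot TM)$. The target is already group-like fiberwise, since $\dot T_x M \cong S^n$ makes the section space behave like a bundle of infinite loop spaces; I would therefore record that the object we must understand is the induced map on group completions $\Omega B$.

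Next I would prove that scanning is a homology equivalence after group completion, by a local-to-global induction over a handle decomposition of $M$. Both functors satisfy a gluing/excision property: restricting configurations or sections to an open piece of $M$ yields a (quasi-)fibration whose fiber is the corresponding space for the complementary region, so a Mayer--Vietoris comparison reduces the general case to a base case plus an inductive step in which scanning is already known on the pieces and on their overlaps. The base case is $M = \R^n$, where $\dot T\R^n = \R^n \times S^n$ and $\Gamma^c_j(\dot T\R^n) = \Omega^n_j S^n$; there the assertion that the group completion of $\coprod_j w_{1^j}(\R^n)$ is $\Omega^n S^n$, with scanning realizing this group completion, is exactly the Segal--May approximation theorem for the little $n$-cubes operad. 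Assembling the induction then shows $s$ is a homology equivalence on $\Omega B$ for every open $M$.

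Finally I would descend to finite $j$. Homological stability (McDuff, Segal) gives that $H_i(w_{1^j}(M)) \m H_i(w_{1^{j+1}}(M))$ is an isomorphism for $j \gg i$, so the colimit computes the stable homology; by the group-completion theorem this colimit is the homology of a single path component of $\Omega B$ of the monoid, which scanning identifies with $H_i(\Gamma^c_j(\dot TM))$ in the stable range, giving the isomorphism statement. For the injectivity statement, when $M$ is the interior of a compact manifold with nonempty boundary a collar retraction exhibits each stabilization map as a split injection on homology, so every $w_{1^j}(M)$ injects into the colimit and hence, via scanning, into $\Gamma^c_j(\dot TM)$ on all $H_i$. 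The main obstacle is the group-completion step itself: the source monoid is not group-like, as it has $\pi_0 = \N$, while the target is, so one must carefully control the homology of individual components and reconcile the stable statement coming from $\Omega B$ with the finite-$j$ statement coming from homological stability.
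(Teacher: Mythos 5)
The paper does not prove this theorem at all: it is quoted verbatim from McDuff, with the isomorphism part cited as Theorem 1.1 of \cite{Mc1} and the injectivity part as contained in the proof of Theorem 4.5 of \cite{Mc1}. So your task was really to reconstruct McDuff's argument, and your sketch does follow the standard Segal--McDuff template (group completion, handle induction, approximation theorem as base case, homological stability to descend to finite $j$). However, there are two genuine gaps. First, your entire setup is restricted to open $M$: the stabilization map and the (quasi-)monoid structure on $\coprod_j w_{1^j}(M)$ are defined by importing points through a boundary collar, which does not exist when $M$ is closed. The isomorphism statement of the theorem is for arbitrary $M$, and the application in this paper (Conjecture G) is precisely to the closed manifold $\C P^2$, so the case you omit is the one that is actually used. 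McDuff handles closed $M$ by an additional comparison step -- roughly, restricting sections to an embedded disk gives a fibration $\Gamma(\dot TM) \to \Gamma(\dot TD^n)$ with fiber $\Gamma^c$ over the open complement, matched against a corresponding decomposition of configuration spaces -- and some such step must appear in any complete proof. (A related minor imprecision: even for open $M$, $\coprod_j w_{1^j}(M)$ is not literally a topological monoid, since two configurations in $M$ cannot be superimposed; one works with a module over configurations in a collar, or with the McDuff--Segal homology fibration criterion.)

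Second, your justification of the injectivity statement is essentially circular as written. A collar retraction only \emph{constructs} the stabilization map $w_{1^j}(M) \to w_{1^{j+1}}(M)$; it does not produce a splitting of that map on homology. Split injectivity of stabilization for configuration spaces of open manifolds is a nontrivial theorem in its own right -- it follows, for example, from the stable splitting of configuration spaces (Bödigheimer--Cohen--Taylor), or from McDuff's own argument in the proof of her Theorem 4.5 -- and, given the group-completion identification $H_i(\Gamma^c_j) \cong \operatorname{colim}_j H_i(w_{1^j}(M))$, it is essentially \emph{equivalent} to the injectivity you are trying to prove. So this step needs a genuine independent input rather than the one-line appeal to a collar.
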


The isomorphism portion of the above theorem is Theorem 1.1 of \cite{Mc1} and the injectivity statement is contained in the proof of Theorem 4.5 of \cite{Mc1}. One can find explicit ranges for this theorem. By Proposition A.1 of \cite{Se}, for homology with integral coefficients $j \geq i/2$ suffices. By Theorem B of \cite{RW}, for homology with rational coefficients $j \geq i$ suffices if the dimension of $M$ is at least $3$.

\section{Conjecture G}

In this section we prove Conjecture G. By Theorem \ref{scan} this amounts to computing the rational homology groups of a space of sections. In principle this can be done using the techniques introduced in \cite{Su3}. However, we will first compare this space of sections with the space of continuous degree $l$ maps  $Map_l(\C P^2,S^4)$. This space is topologized with the compact open topology and the degree of a map is defined via the induced map on top dimensional homology. Because $\C P^2$ is compact the space of compactly supported sections is equal to the space of all sections. We use the following proposition, which follows from Proposition 3.5 of \cite{BMi}.

\begin{proposition}Let $M$ be a compact orientable manifold of dimension $n$ and let $\chi(M)$ denote its Euler characteristic. For all integers $k \neq \chi(M)/2$ and integers $l \neq 0$, $\Gamma^c_k(\dot TM)$ is rationally homotopy equivalent to $Map_l(M,S^n)$.
\label{BM}
\end{proposition}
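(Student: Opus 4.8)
The plan is to identify $\Gamma^c_k(\dot TM)$ with a union of components of the section space of an oriented $S^n$-bundle and then to show that, rationally and away from the excluded components, this bundle behaves like a trivial one. Since $M$ is compact every section is automatically compactly supported, so $\Gamma^c_k(\dot TM)$ is a union of path components of the space of sections of the fibre bundle $\dot TM \m M$, whose fibre is $\R^n \cup \{\infty\} \cong S^n$. As $M$ is orientable, $TM$ is an oriented rank-$n$ bundle and $\dot TM$ is an oriented linear $S^n$-bundle with structure group $SO(n)$ fixing the two poles $0$ and $\infty$. The trivial bundle $M \times S^n$ has section space $\mathrm{Map}(M,S^n)$, so the content of the proposition is that the Euler class $e(TM)$, the sole obstruction to trivialising $\dot TM$, becomes rationally irrelevant on the relevant components.

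First I would dispose of the case $n$ odd, where $\chi(M)=0$ and $(S^n)_\Q \simeq K(\Q,n)$. Fibrewise rationalisation turns $\dot TM$ into a bundle of Eilenberg--MacLane spaces, classified by the $\pi_1(M)$-action on $H^n(S^n;\Q)=\Q$, which is trivial since $TM$ is oriented, together with a twisting class built from $e(TM)$. Because the Euler class of an odd-rank bundle is $2$-torsion it vanishes rationally, the bundle is rationally a product, and $\Gamma_k(\dot TM) \simeq_\Q \mathrm{Map}(M,K(\Q,n)) \simeq_\Q \mathrm{Map}_l(M,S^n)$. As $K(\Q,n)$ is a rational $H$-space all components agree, so any $k \neq 0 = \chi(M)/2$ matches any $l \neq 0$.

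The even case is where the hypotheses are really used. Now $(S^n)_\Q$ has minimal model $\Lambda(x_n,y_{2n-1})$ with $dy = x^2$, so it is not an $H$-space and, as exploited in \cite{MR}, the rational homotopy type of a component of $\mathrm{Map}(M,S^n)$ or of $\Gamma_k(\dot TM)$ genuinely depends on its degree. The section at infinity has degree $0$ while the zero section has degree $\langle e(TM),[M]\rangle = \chi(M)$, and the fibrewise involution of $S^n$ swapping $0$ and $\infty$ carries degree-$k$ sections to degree-$(\chi(M)-k)$ sections; its fixed value $k = \chi(M)/2$ is the exceptional one, and it is precisely the value matching the excluded degree-$0$ mapping component. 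The key step is to prove that on components with $k \neq \chi(M)/2$ the Euler-class twisting merely reindexes components without changing the rational homotopy type, so that $\Gamma_k(\dot TM) \simeq_\Q \mathrm{Map}_l(M,S^n)$ for $l \neq 0$. This is exactly the comparison furnished by Proposition 3.5 of \cite{BMi}, whose hypotheses — an oriented sphere bundle over a compact manifold together with the restrictions $k \neq \chi(M)/2$ and $l \neq 0$ — are the ones verified above, and applying it concludes the proof.

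I expect the main obstacle to be this even-dimensional comparison: controlling how $e(TM)$ acts on the $(2n-1)$-dimensional rational homotopy coming from the relation $dy = x^2$, and checking that the degree bookkeeping aligns so that the forbidden section degree $\chi(M)/2$ matches the forbidden map degree $0$. The reduction to a section space and the odd-dimensional collapse via torsion of the Euler class are, by contrast, formal.
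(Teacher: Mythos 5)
Your proposal is correct and, like the paper, ultimately rests on citing Proposition 3.5 of \cite{BMi} for the key comparison of components; the surrounding discussion (the degree bookkeeping via the fibrewise involution identifying $k=\chi(M)/2$ as the exceptional section degree matching the exceptional map degree $l=0$, and the odd-dimensional collapse) is accurate. The paper's own justification is slightly more uniform: rather than splitting on parity or invoking the Euler class directly, it observes that the classifying space of oriented rational $S^n$-fibrations is $n$-connected, so the fibrewise rationalization of $\dot TM$ is trivial over the $n$-dimensional base for purely dimensional reasons, with the hypotheses $k \neq \chi(M)/2$ and $l \neq 0$ entering only when matching components under that trivialization -- but the substance of the two arguments is the same.
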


The above proposition follows from the fact that the classifying space of oriented rational $S^n$ bundles is $n$-connected. This implies that the classifying map of such a bundle over an $n$-dimensional space is  null-homotopic. In particular, the rationalization of $\dot TM$ is a trivial rational $S^n$ bundle. This trivialization allows one to compare the components of  $\Gamma^c(\dot TM)$ and $Map(M,S^n)$.

Since we are interested in the limit as $k$ tends to infinity, the condition that $k \neq \chi(M)/2$ does not concern us. The spaces $Map_l(\C P^2,S^4)$ are simpler than the spaces $\Gamma^c_k(\dot T \C P^2)$ and their rational homology groups appear in the literature. For example, in  Example 2.5 (2) of \cite{MR}, M{\o}ller and Raussen proved the following using minimal models.

\begin{proposition}
Let $l$ be a non-zero integer. The commutative differential graded algebra $C^*(Map_l(\C P^m,S^{2m});\Q)$ of rational cochains is quasi-isomorphic to the free graded commutative algebra on classes $b_2,b_4 \ldots b_{2m-2}$ and $v_{2m+1}, v_{2m+3}, \ldots,  v_{4m-1}$ (subscripts indicate homological degree). The differential is given by: 

 \[ \partial(b_{2i})=0 \text{ for all }i \] \[ \partial(v_{4m-2i-1})= \sum_{r+s=2m-i} b_{2r}b_{2s} -\sum_{r+s=m} b_{2r}b_{2s} b_{2m-2i}  \text{ for } 0<i<m\] \[
\partial(v_{4m-1})=\frac{1}{4} \left(\sum_{r+s=m} b_{2r}b_{2s} \right)^2. \]

\label{model}
\end{proposition}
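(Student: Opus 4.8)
The plan is to compute the rational homotopy type of the component $Map_l(\C P^m, S^{2m})$ directly, via the standard Sullivan model for function spaces (the Haefliger--Brown--Szczarba model), and then to reduce it to minimal form. Two inputs are needed. First, the even sphere $S^{2m}$ has minimal model $(\Lambda(x,y),d)$ with $|x| = 2m$, $|y| = 4m-1$, $dx = 0$ and $dy = x^2$. Second, $\C P^m$ is formal, so I may use the finite-dimensional commutative model $A = H^*(\C P^m;\Q) = \Q[t]/(t^{m+1})$ with zero differential, with homogeneous basis $1,t,\dots,t^m$ in degrees $0,2,\dots,2m$ and dual basis $(t^i)^*$.

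First I would write down the function-space model. Its generators are $V \otimes A^\vee$ with $|v\otimes a^*| = |v| - |a|$; explicitly set $\xi_i = x\otimes (t^i)^*$ of degree $2m-2i$ and $\eta_i = y\otimes (t^i)^*$ of degree $4m-1-2i$, for $0 \le i \le m$. The differential is induced by $dy = x^2$ together with the coproduct on $A^\vee$ dual to the multiplication $t^i\cdot t^j = t^{i+j}$, which sends $(t^k)^* \mapsto \sum_{i+j=k}(t^i)^*\otimes (t^j)^*$. This gives
\[ D\xi_i = 0, \qquad D\eta_k = \sum_{i+j=k}\xi_i\,\xi_j. \]
Since $dy$ is purely quadratic, no higher terms appear; note also that $D^2 = 0$ is immediate because every differential lands in the polynomial subalgebra on the closed classes $\xi_i$. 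The only generator of degree $0$ is $\xi_m$, and there are no generators in negative degree.

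Next I would pass to the degree-$l$ component. The pullback of the fundamental class of $S^{2m}$ is $l\,t^m$, which amounts to augmenting the degree-$0$ generator by $\xi_m = l$. The hypothesis $l\ne 0$ then makes the pair $(\xi_0,\eta_m)$ acyclic, since
\[ D\eta_m = \sum_{i+j=m}\xi_i\xi_j = 2l\,\xi_0 + \sum_{i=1}^{m-1}\xi_i\,\xi_{m-i}, \]
so one may cancel it and substitute $\xi_0 = -\tfrac{1}{2l}\sum_{i=1}^{m-1}\xi_i\xi_{m-i}$. Renaming the survivors $b_{2r} := \xi_{m-r}$ for $1\le r\le m-1$ and $v_{4m-1-2k} := \eta_k$ for $0\le k\le m-1$ matches the generators in the statement. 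Substituting $\xi_0$ into $D\eta_k = \sum_{i+j=k}\xi_i\xi_j$ and re-indexing by $r=m-i$, $s=m-j$ (so $r+s = 2m-k$), the boundary terms involving $\xi_0$ produce exactly the correction term, giving
\[ \partial v_{4m-2k-1} = \sum_{r+s=2m-k}b_{2r}b_{2s} - \frac{1}{l}\Big(\sum_{r+s=m}b_{2r}b_{2s}\Big)b_{2m-2k}, \qquad \partial v_{4m-1} = \frac{1}{4l^2}\Big(\sum_{r+s=m}b_{2r}b_{2s}\Big)^2. \]
A uniform rescaling $b_{2r}\mapsto l^{-1}b_{2r}$, $v\mapsto l^{-2}v$ normalizes $l=1$ (and shows all nonzero-degree components are rationally equivalent), yielding the stated formulas.

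The main obstacle is not the algebra above, which is routine bookkeeping of degrees and binomial re-indexing, but justifying that this finite model actually computes the function space. I would need to invoke the Haefliger--Brown--Szczarba theorem and check that the ideal appearing in their construction is harmless in the present situation, where $A$ is concentrated in even degrees and $dy$ is quadratic. The second delicate point is the component analysis: identifying the correct augmentation of the unique degree-$0$ generator $\xi_m$ (which is exactly what forces $l\ne 0$) and verifying that, after cancelling the acyclic pair $(\xi_0,\eta_m)$, the remaining CDGA is genuinely minimal so that it displays the rational homotopy type as claimed.
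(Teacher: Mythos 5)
Your proposal is correct: the paper itself gives no proof of this proposition but simply quotes it from M{\o}ller--Raussen (Example 2.5(2) of [MR]), and your Haefliger--Brown--Szczarba computation is exactly the method used there, so this is essentially the same approach. Your bookkeeping checks out — in particular the elimination of the acyclic pair $(\xi_0,\eta_m)$, the resulting coefficients $-1/l$ and $1/(4l^2)$, and the rescaling normalizing $l=1$ all agree with the stated formulas and with the paper's specialization to $m=2$ (where $\partial v_5=-b_2^3$ and $\partial v_7=b_2^4/4$).
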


Specializing to the case $m=2$, we get the following. For $l$ a non-zero integer, the commutative differential graded algebra $C^*(Map_l(\C P^2,S^4);\Q)$ of rational cochains is quasi-isomorphic to the free graded commutative algebra on classes $b_2$, $v_5$, $v_7$ with the following differential: $\partial(b_2)=0$, $\partial(v_5)=-b_2^3$ and $\partial(v_7)=b_2^4/4$. 

We now have all of the ingredients needed to prove Conjecture G.

\begin{proof}[Proof of Theorem \ref{G}]
By Theorem \ref{scan} and Proposition \ref{BM} the stable homology of $w_{1^j}(\C P^2)$ agrees with that of $Map_l(\C P^2,S^4)$ with $l$ any nonzero integer. By Proposition \ref{model} a basis of a commutative differential graded algebra quasi-isomorphic to $C^*(Map_l(\C P^2,S^4);\Q)$ for $l \neq 0$ is given by classes of the form: $$b_2^i,\, b_2^i v_5,\, b_2^i v_7,\, b_2^i v_5 v_7$$ with $i \geq 0$. Using the description of the differential from Proposition \ref{model}, we can compute its kernel and image. Namely, a basis of the kernel of the differential is given by: $$b_2^i,\,b_2^i(b_2 v_5+4v_7)$$ with $i \geq 0$ and a basis of the image is given by: $$b_2^{3+i},\, b_2^{3+i}(b_2 v_5+4v_7)$$ with $i \geq 0$. Thus a basis of $H^*(Map_l(\C P^2,S^4);\Q)$ is given by the classes: $$1,\,b_2,\,b_2^2,\,(b_2 v_5+4v_7),\,b_2(b_2 v_5+4v_7),\,b_2^2(b_2 v_5+4v_7)$$ and so the rational cohomology vanishes except in dimensions $0,2,4,7,9,11$ where it is one dimensional. By the universal coefficient theorem, the rational homology also vanishes except in dimensions $0,2,4,7,9,11$ where it is one dimensional. 
\end{proof}

This proof gives slightly more information. The ring structure on the cohomology $H^*(Map_l(\C P^2,S^4);\Q)$ is induced by the ring structure on the commutative differential graded algebra described in Proposition \ref{model}. Since this algebra has a ring structure equal to that of a free graded commutative algebra, we see that for non-zero $l$ there is an isomorphism of rings \[H^*(Map_l(\C P^2,S^4);\Q)=\Q[b_2]/(b_2^3) \otimes \Lambda(c_7).\]
Here subscripts indicate homological degree, $c_7=b_2v_5+4v_7$ and $\Lambda$ denotes the exterior algebra with rational coefficients. From this we deduce an isomorphism of rings \[H^*(w_{1^j}(\C P^2);\Q)=\Q[b_2]/(b_2^3) \otimes \Lambda(c_7)\] in the range $* \leq j$, because the scanning map is an isomorphism in that range.

\begin{remark} Using the calculation of  $C^*(Map_l(\C P^m,S^{2m});\Q)$ for other $m$ and arguments similar to those of Theorem \ref{G}, one can calculate the stable rational cohomology of the spaces $w_{1^j}(\C P^m)$ for any $m$. For example, we have: 
\[\underset{j \to  \infty}{\text{colim}}\, H_i(w_{1^j}(\C P^3);\Q) = \begin{cases}
0 & \text{ if $i=1,3,5,7$ or $9$} \\
\Q & \text{ if $i=0,2,10,11,12,14,16,18,20$ or $22$} \\
\Q^3 & \text{ if $i=15$, $17$ or $19$} \\
\Q^2 & \text{ otherwise.} 
\end{cases} \] 
Moreover, as a ring, the stable rational cohomology is the graded commutative algebra generated by classes \[b_2,\,b_4,\,c_{11} \text{ and } c_{13}\] with relations generated by \[b_4^2-2b_2^2b_4 ,\, b_2b_4^2 \text{ and } b_4c_{13}-2b_2b_4c_{11}.  \] In personal communication with Vakil and Wood, we have been informed that these are not the groups predicted by Occam's razor for Hodge structures.
\end{remark}

\section{Conjecture H}

In this section we prove Conjecture H. First we note that $w_{1^j2}(\C^d)$ is homotopic to $w_{1^j}(\C^d-pt)$, where $\C^d -pt$ denotes $\C^d$ with a single point removed.

\begin{lemma}
The spaces $w_{1^j2}(\C^d) $ and $w_{1^j}(\C^d-pt)$ are homotopy equivalent.
\end{lemma}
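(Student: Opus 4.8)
The plan is to exhibit an explicit homeomorphism identifying $w_{1^j2}(\C^d)$ with a product $\C^d \times w_{1^j}(\C^d - pt)$ and then invoke contractibility of $\C^d$. The geometric idea is that a configuration of type $1^j2$ consists of a single distinguished doubled point together with $j$ simple points avoiding it; translating so that the doubled point sits at a fixed basepoint exhibits the remaining data as a configuration of $j$ distinct points in the complement of that basepoint.

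Concretely, fix the removed point to be $0 \in \C^d$, so that $w_{1^j}(\C^d - pt) = w_{1^j}(\C^d - \{0\})$. I would define
$$\Phi \colon \C^d \times w_{1^j}(\C^d - \{0\}) \m w_{1^j2}(\C^d), \qquad (p, \{x_1, \ldots, x_j\}) \longmapsto \{p, p, x_1 + p, \ldots, x_j + p\},$$
where the right-hand side is regarded as a point of $Sym^{j+2}(\C^d)$. Since each $x_i \neq 0$ and the $x_i$ are distinct, the image has exactly one doubled point (namely $p$) and $j$ simple points distinct from it, so $\Phi$ indeed lands in $w_{1^j2}(\C^d)$. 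The map $\Phi$ is continuous because it is built from addition in $\C^d$ followed by the quotient map defining the symmetric product, and it is a bijection: its set-theoretic inverse sends a configuration to the pair consisting of its doubled point $p$ and the translated simple points $\{y_1 - p, \ldots, y_j - p\}$.

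It remains to see that $\Phi$ is a homeomorphism, and this is the one point needing care, since the inverse involves extracting the doubled point, whose continuity on a symmetric product is not entirely obvious. I would avoid checking this directly by the following observation. Restricted to the stratum $w_{1^j2}$ the doubled point never splits, so locally a configuration is determined by the position of the doubled point together with the positions of the $j$ simple points; hence $w_{1^j2}(\C^d)$ is a manifold of real dimension $2d(j+1)$, the same as that of the source $\C^d \times w_{1^j}(\C^d - \{0\})$. A continuous injection between manifolds of equal dimension is open by invariance of domain, so the continuous bijection $\Phi$ is automatically a homeomorphism.

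Finally, since $\C^d$ is contractible, the projection $\C^d \times w_{1^j}(\C^d - \{0\}) \m w_{1^j}(\C^d - \{0\})$ is a homotopy equivalence, giving
$$w_{1^j2}(\C^d) \cong \C^d \times w_{1^j}(\C^d - pt) \simeq w_{1^j}(\C^d - pt),$$
as desired. The main obstacle is the continuity of the inverse of $\Phi$; the cleanest route, as above, is to recognize both spaces as manifolds of the same dimension and apply invariance of domain, rather than to verify continuity of the doubled-point extraction by hand.
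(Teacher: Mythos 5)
Your proof is correct and is essentially the paper's argument made explicit: the paper records the location of the doubled point to obtain a fiber bundle $w_{1^j2}(\C^d) \to \C^d$ with fiber $w_{1^j}(\C^d - pt)$ and then uses contractibility of the base, and your translation map $\Phi$ is precisely the global trivialization of that bundle. Your invariance-of-domain step is a legitimate way to settle continuity of the inverse (one could also verify it directly on the local charts of the stratum given by disjoint balls around the doubled point and the simple points), but the underlying decomposition is the same.
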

\begin{proof}
Note that the map $w_{1^j2}(\C^d) \m \C^d$ recording the location of the two points which coincide is a fiber bundle. Its fibers are homeomorphic to  $w_{1^j}(\C^d-pt)$. Since $\C^d$ is contractible, the claim follows. 
\end{proof}

We can now apply Theorem \ref{scan} to study the stable homology groups of $w_{1^j2}(\C^d) \simeq w_{1^j}(\C^d-pt)$. In particular, Conjecture H will follow immediately from the following proposition.

\begin{proposition} \label{sectionpdisk}
The homology groups $ H_i(\Gamma_l^c(\dot T(\C^d -pt));\Q)$ are eventually periodic in $i$. Moreover, they are two-dimensional when $i=k(2d-1)$ for $k$ a positive integer, one dimensional when $i=0$ and zero otherwise, proving equation \ref{eqncorrects} is correct.
\end{proposition}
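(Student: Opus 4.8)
The plan is to reduce this to a rational homology computation for an explicit product of mapping spaces, in the same spirit as the proof of Theorem \ref{G}, but taking care of the non-compactness of $\C^d - pt$. First I would observe that $\C^d - pt$ is an open subset of $\C^d$ and hence parallelizable, so $\dot T(\C^d - pt)$ is the trivial bundle $(\C^d - pt) \times S^{2d}$ with $S^{2d} = (\R^{2d})^+$, the section at infinity corresponding to the constant map at the basepoint. Thus $\Gamma^c_l(\dot T(\C^d - pt))$ is the space of compactly supported maps $\C^d - pt \to S^{2d}$ of degree $l$. Such a map extends over the one-point compactification by sending $\infty$ to the basepoint, which identifies $\Gamma^c_l$ up to homotopy with a path component of the based mapping space $\mathrm{Map}_*((\C^d - pt)^+, S^{2d})$. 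Note that Proposition \ref{BM} does not apply directly, since $\C^d - pt$ is not compact; this is exactly why compactly supported sections, equivalently based maps out of the compactification, must be used.

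Next I would compute the compactification. Writing $\C^d - pt = \R^{2d} - 0 \cong S^{2d-1} \times \R$ and using the monoidality of one-point compactification, $(Y \times \R)^+ \cong Y^+ \wedge S^1$, together with the standard equivalence $\Sigma(Y_+) \simeq \Sigma Y \vee S^1$, gives $(\C^d - pt)^+ \cong (S^{2d-1})_+ \wedge S^1 \simeq S^{2d} \vee S^1$. Since a based map out of a wedge is a pair of based maps, this yields $\mathrm{Map}_*((\C^d - pt)^+, S^{2d}) \cong \mathrm{Map}_*(S^{2d}, S^{2d}) \times \mathrm{Map}_*(S^1, S^{2d}) = \Omega^{2d} S^{2d} \times \Omega S^{2d}$. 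The second factor is connected, and $\Omega^{2d} S^{2d}$ is an $H$-space so all of its path components are homotopy equivalent; hence for every $l$ one obtains $\Gamma^c_l(\dot T(\C^d - pt)) \simeq \Omega^{2d}_0 S^{2d} \times \Omega S^{2d}$, independent of $l$. In particular the colimit over $l$ stabilizes at once.

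Finally I would compute the rational homology of each factor and combine. For the even sphere, $H_*(\Omega S^{2d}; \Q) \cong \Lambda(a_{2d-1}) \otimes \Q[b_{4d-2}]$, whose Poincaré series is $(1 + t^{2d-1})/(1 - t^{4d-2}) = 1/(1 - t^{2d-1})$, so it is one-dimensional in each degree $k(2d-1)$, $k \geq 0$. For the other factor, the rational homotopy groups are $\pi_k(\Omega^{2d}_0 S^{2d}) \otimes \Q = \pi_{k+2d}(S^{2d}) \otimes \Q$, and since $\pi_*(S^{2d}) \otimes \Q$ is concentrated in degrees $2d$ and $4d-1$ these vanish for all $k \geq 1$ except $k = 2d-1$, where the group is $\Q$; thus $\Omega^{2d}_0 S^{2d}$ is rationally a $K(\Q, 2d-1)$ with $2d-1$ odd, giving $H_*(\Omega^{2d}_0 S^{2d}; \Q) \cong \Lambda(x_{2d-1})$ with Poincaré series $1 + t^{2d-1}$. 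By Künneth the product Poincaré series is $(1 + t^{2d-1})/(1 - t^{2d-1}) = 1 + 2\sum_{k \geq 1} t^{k(2d-1)}$, which is $\Q$ in degree $0$, $\Q^2$ in each degree $k(2d-1)$ for $k \geq 1$, and $0$ otherwise. This matches equation \ref{eqncorrects} and is manifestly eventually periodic.

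The main obstacle is pinning down the homotopy type of the section space correctly. Because the homology of a space of compactly supported maps is \emph{not} invariant under homotopy equivalences of the source, one cannot replace $\C^d - pt$ by $S^{2d-1}$; the correct bookkeeping of the ends through the compactification $S^{2d} \vee S^1$ (rather than merely $S^{2d}$) is precisely what produces the extra $\Omega S^{2d}$ factor responsible for the two-dimensional groups, and hence for the discrepancy with Formula \ref{eqnformula150}. Once this identification is in place, the rational-homotopy input for $\Omega^{2d}_0 S^{2d}$ and $\Omega S^{2d}$ is standard.
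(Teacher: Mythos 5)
Your proof is correct and takes essentially the same route as the paper: trivialize $\dot T(\C^d - pt)$, identify the compactly supported section space with based maps out of the one-point compactification $\simeq S^{2d}\vee S^1$, split off $\Omega^{2d}S^{2d}\times\Omega S^{2d}$, and feed in Serre's rational computation. The only differences are cosmetic (you compute the compactification via $S^{2d-1}\times\R$ and phrase the Serre input as a rational $K(\Q,2d-1)$ rather than a rational $S^{2d-1}$).
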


\begin{proof} We will compute the ring $H^*(\Gamma_l^c(\dot T(\C^d -pt));\Q)$. Because the tangent bundle of $\C^d$ is trivial, the bundle $\dot T(\C^d -pt)$ is also trivial and hence $\Gamma_l^c(\dot T(\C^d -pt))$ is homeomorphic to a component of the space of compactly supported maps $Map_l^c(\C^d -pt,S^{2d})$. The space $Map^c(\C^d -pt,S^{2d})$ is homotopic to the space of pointed maps $Map_{*}(\overline{\C^d-pt},S^{2d})$, where $\overline{\C^d-pt}$ is the one-point compactification of $\C^d-pt$. This one-point compactification is homotopy equivalent to $S^{2d}/(0 \sim \infty)$, which in turn is homotopy equivalent to $S^{2d} \vee S^1$, and hence we conclude that $Map^c(\C^d -pt,S^{2d})$ is homotopy equivalent to $\Omega^{2d} S^{2d} \times \Omega S^{2d}$ where $\Omega$ denotes the based loop space. It is well known that $H^*(\Omega S^{2d};\Q)=\Q[a_{4d-2}] \otimes \Lambda(a_{2d-1})$ with the subscript indicating homological degree. By Serre's calculation of the rational homotopy groups of spheres (page 500 of \cite{Ser}), $\Omega^{2d} S^{2d}$ is rationally homotopy equivalent to $\Z \times S^{2d-1}$ and $H^*(S^{2d-1};\Q) =\Lambda(b_{2d-1})$. By the K\"unneth theorem, we get $H^*(\Gamma_l^c(\dot T(\C^d -pt));\Q)= \Q[a_{4d-2}] \otimes \Lambda(a_{2d-1}) \otimes \Lambda(b_{2d-1})$. The claim now follows by the universal coefficient theorem. 
\end{proof}

%By the Bott-Samuelson Theorem, $H_*( \Omega S^{2d};\Q)$ is zero except in dimensions divisible by $2d-1$ where it is one %dimensional.

As in the remarks following the proof of Theorem \ref{G}, the proof of this proposition allows one to compute the ring structure of $H^*(w_{1^j2}(\C^d);\Q)$ in the stable range. Note that while the above proposition establishes Conjecture H, it also contradicts equation \ref{eqnformula150}, giving a counterexample to Occam's razor for Hodge structures.

The eventual periodicity in the stable rational cohomology groups of the spaces $w_{1^j2}(\C^d)$ comes from the periodicity of $H^*(\Omega S^{2d};\Q)$ and the finite dimensionality of $H^*(\Omega^{2d} S^{2d};\Q)$. In general, one expects polynomial growth. 

\begin{remark}
In fact there is a straightforward procedure for computing the rational homology of $w_{\lambda}(\C^{d})$ for any partition $\lambda$. Let $G_n$ be the set of partitions of integers into only the numbers $1,2, \ldots, n$ and let $W_n$ denote the space $\bigsqcup_{\lambda \in G_n} w_{\lambda}(\C^d)$. Since $W_n$ is homotopic to the free algebra over the little $(2d)$-disks operad on the set $\{1, \ldots, n \}$, $H_*(W_n;\Q)$ is the free $(2d)$-Gerstenhaber algebra on $n$ classes in homological degree $0$. This follows from the mod $p$ calculations in Theorem 3.1 of part III of \cite{CLM} and knowledge of the Bockstein. Call these homology classes $x_1, \ldots, x_n$ and let $\phi : H_p(W_n;\Q) \otimes H_q(W_n;\Q) \m H_{p+q+2d-1}(W_n;\Q)$ denote the Gerstenhaber bracket. From this description, one can compute the rational homology of $w_{\lambda}(\C^{d})$.  In particular, $H_*(w_{1^j2}(\C^d);\Q)$ has a basis given by classes of the form: $$x_1^j x_2,\,x_1^{j-1}  \phi(x_1,x_2),\, x_1^{j-2}\phi(x_1,\phi(x_1,x_2)),\, x_1^{j-3} \phi(x_1,(\phi(x_1,\phi(x_1,x_2))), \dots$$ and $$x_1^{j-2}\phi(x_1,x_1)x_2,\, x_1^{j-3}\phi(x_1,x_1)\phi(x_1,x_2),\, x_1^{j-4}\phi(x_1,x_1)\phi(x_1,\phi(x_1,x_2)), \dots$$ where a class does not appear if the exponent on the $x_1$ term is negative. After counting how many elements appear in a given homological dimension, this description gives an alternative derivation of equation \ref{eqncorrects}.

One can also use this result to calculate the rational cohomology ring of $w_{1^j2}(\C^d)$ in a much bigger range. The previous calculation of the cohomology as a vector space tells us that $H^*(w_{1^j2}(\C^d);\Q)$ is abstractly isomorphic to  $H^*(\Gamma_l^c(\dot T(\C^d -pt);\Q)$ in the range $* \leq j(2d-1)-1$. Since the scanning map $s:w_{1^j2}(\C^d) \m \Gamma_l^c(\dot T(\C^d -pt))$ is injective on homology, it is surjective on cohomology. Thus, it induces an isomorphism in that range. Using the calculation of the ring structure of $H^*(\Gamma_l^c(\dot T(\C^d -pt));\Q)$ as in the proof of Proposition \ref{sectionpdisk} we conclude that as a ring the rational cohomology of $w_{1^j 2}(\C^d)$ is equal to $\Lambda(a_{2d-1})  \otimes  \Q[a_{4d-2}] \otimes \Lambda(b_{2d-1})$ in the range $* \leq j(2d-1)-1$.\end{remark}

\begin{remark}
One can also do calculations such as the following
\[\underset{j \to  \infty}{\text{colim}}\, H_i(w_{1^j2\,3}(\C^d);\Q) = \begin{cases}
\Q & \text{ if $i=0$} \\
\Q^{4k} & \text{ if $i=k(2d-1)$ for $k>1$} \\
0 & \text{ otherwise} 
\end{cases} \] 
either using the fiber sequence
\[w_{1^j}(\C^d - (pt \sqcup pt)) \to w_{1^j 2\,3}(\C^d) \to  w_{ 2\,3}(\C^d) \simeq S^{2d-1}\]
along the lines of Proposition \ref{sectionpdisk}, or by a free Gerstenhaber algebra calculation along the lines of the previous remark.
\end{remark}

\bibliography{theis4}{}
\bibliographystyle{alpha}

\end{document}